\newtheorem{theorem}{Theorem}[section]
\newtheorem{lemma}[theorem]{Lemma}
\theoremstyle{definition}
\newtheorem{definition}[theorem]{Definition}
\newtheorem{example}[theorem]{Example}
\newtheorem{proposition}[theorem]{Proposition}
\theoremstyle{remark}
\newtheorem{remark}[theorem]{Remark}
\numberwithin{equation}{section}
\def\D{\mathcal D}
\def\K{\mathcal K}
\def\A{\mathscr A}
\def\Q{\mathcal Q}
\def\a{\alpha}
\def\b{\beta}
\def\l{\lambda}
\def\R{\mathbb R}
\def\E{\mathbb E}
\def\F{\mathscr F}
\def\P{\mathbb P}
\def\o{\omega}
\def\O{\Omega}
\def\t{\tau}
\def\e{\varepsilon}
\def\p{\psi}
\def\r{\rightarrow}
\def\s{\sigma}
\def\g{\gamma}
\def\i{\infty}
\def\oo{\mathcal O}
\def\B{\mathcal B}
\begin{document}

\title[Weak Pullback Mean Random Attractors for SEE and Applications]
{Weak Pullback  Mean Random Attractors for Stochastic Evolution Equations
and Applications}

\author{Anhui Gu}
\address{School of Mathematics and Statistics, Southwest
University, Chongqing 400715, China}

\email{gahui@swu.edu.cn}

\subjclass[2000]{Primary 35B40; Secondary 35B41; 37L30}



\keywords{Weak pullback mean random attractor, general diffusion term, stochastic evolution equation.}

\begin{abstract}
In this paper, we investigate the existence and uniqueness of weak pullback mean random attractors
for abstract
stochastic evolution equations with general diffusion terms in Bochner spaces.
As applications, the existence and uniqueness of weak pullback mean random attractors for some
stochastic models such as stochastic reaction-diffusion equations, the
stochastic $p$-Laplace equation and stochastic porous media equations are established.
\end{abstract}

\maketitle



\section{Introduction}\label{sect0}

In this work, we aim to consider the long-term behavior for the
following non-autonomous stochastic abstract evolution
equation on an open bounded domain $\oo\subseteq \R^n$:
\begin{equation} \label{sys00}
du=\big(A(t, u)+F(u)+g(t, x)\big)dt+\e G(t, u)dW(t),
\quad x\in \oo, t>\t,
\end{equation}
with initial-boundary condition,
where $\t\in \R$, $F$ is a Lipschitz function, $A$ and $G$ are mappings specified later,
$\e G(t, u)dW$
is a general diffusion term with $\e\in (0, 1]$
a constant denoting the intensity of the noise.
Originally, such type of equations have a rich
mathematical theory and fundamental applications, see
e.g.  \cite{PR07} for more details.

Random attractor is the effective tool for
capturing the pathwise long-time behavior for random dynamical systems generated by
stochastic differential equations. Based on the theory in \cite{Arnold98}, we know that an It\^{o}-type stochastic ordinary differential
equation generates a random dynamical system under natural assumptions on the
coefficients. The pullback random attractors theory was systematically established
first in \cite{CF94, FS96, Schmalfuss92} and gained its popular applications since the late 1990s,
see e.g. \cite{BLW09, BGLR11, CRC08, CS04, CDF97, CF94, DS03, GLM11, Gess13a, Gess13b, GLWY18, HS09, LGL15, SZS10, Wang11, Wang12, Zhou17}.
The generation of a random dynamical system from an
It\^{o}-type stochastic partial differential equation has been a long-standing
open problem. Most of the efforts are taken to tackle the special cases
when $G(t, u)$ is linear in $u$. When $G(t, u)$ is a general nonlinear
function, we even don't know whether system  \eqref{sys00} generates a random
dynamical system or not. The main reason is that Kolmogorov's theorem fails for random
fields parameterized by
infinite-dimensional Hilbert spaces. This fact leads to dynamical aspects
such as the asymptotic stability,
random attractors, random invariant manifolds have not been fully investigated in generality.

Compared to the pathwise random attractor methods used in
\cite{GGS14, GLS10, GLS16, GGW20, GLW19, GW18, LW17}, one possible theory to deal with
this issue above is to replace it by the mean-square random attractor as suggested in
\cite{KL12}. However, the existence
of mean-square random attractor theory is difficulty in dealing with the general
nonlinear functions in drift term.
Quite recently,
a new type of weak mean-square random attractor, i.e., weak pullback mean random attractor
in reflective Bochner spaces was introduced in \cite{Wang18a, Wang18b} to tackle the stochastic
partial differential equations with general nonlinear drift
and diffusion terms based on the weak topology. Following this line, we first obtain the dynamic
behavior of the abstract system  \eqref{sys00} when the intensity $\e$ small enough
and then apply to some concrete equations such as stochastic reaction-diffusion equations, the
stochastic $p$-Laplace equation and stochastic porous media equations.
It seems that this is the first time to consider establishing these results for
the non-autonomous stochastic evolution equations based on variational approach.
This approach has been used intensively
in recent years to analyze stochastic partial differential equations driven by an infinite-dimensional Wiener process.
For more details, see e.g. the monographs \cite{LR15, PR07} and the references therein.

The paper is arranged as follows. In the next section, we give some basic concepts and results on the existence of weak pullback mean random attractors
for mean random dynamical systems. In Section \ref{sect2}, we first define a mean random dynamical system via the solution operators of the abstract non-autonomous
stochastic evolution equation, and then prove the existence and uniqueness of weak
pullback mean random attractors for this model when the intensity of the noise is small enough. In Section \ref{applications}, some models with different mappings $A$ and appropriate chosen coefficients
to guarantee the  existence of weak
pullback mean random attractors are addressed.

\section{Theory of Mean Random Dynamical Systems}\label{sect1}
We recall some basic concepts and results on the existence of weak pullback mean random attractors
for mean random dynamical systems from \cite{CGS07, KL12, Wang18a, Wang18b}. Let $(\O, \F, \{\F_t\}_{t\in \R}, \P)$
be a complete filtered probability space with $\{\F_t\}_{t\in \R}$ is an increasing right continuous
family of sub-$\s$-algebras of $\F$ that includes all $\P$-null sets. Denote $X$ be a Banach space with
norm $\|\cdot\|_X$. We say a function $\p: \O\r X$ is strongly measurable if there exists a sequence
of simple functions $\p_n: \O\r X$, such that $\lim\limits_{n\r\i}\|\p_n-\p\|_X=0$ $\P$-a.e. Such a function
$\p$ is called Bochner integrable if there exists a sequence of simple functions $\p_n: \O\r X$, such that
$$\lim_{n\r\i}\int_\O\|\p_n-\p\|_Xd\P=0,$$
then the Bochner integral of $\p$ is defined as
$$\int_{\O}\p d\P=\lim_{n\r\i}\int_{\O}\p_n d\P.$$
Now, for $q\ge 1$, we denote $L^q(\O, \F; X)$ the Bochner space consisting of all Bochner integrable functions
$\p: \O\r X$ such that
$$\|\p\|_{L^q(\O, \F; X)}=\Big(\int_\O\|\p\|^q_Xd\P\Big)^{1/q}<\i.$$
For every $\t\in \R$, we define the space $L^q(\O, \F_\t; X)$ analogously.

Let $\D$ be a collection of some families of nonempty bounded subsets of $L^q(\O, \F_\t; X)$
parameterized by $\t\in \R$, that is,
\begin{eqnarray}\label{sub}
\begin{split}
\D=\{D=\{D(\t)\subseteq L^q(\O, \F_\t; X): D(\t)&\neq \emptyset\ \text{bounded},\ \t\in \R\}:\\
&D\ \text{satisfies some conditions}\}.
\end{split}
\end{eqnarray}
Such a collection $\D$ is called inclusion-closed if $D=\{D(\t): \t\in \R\}\in \D$ implies that
every family $\hat{D}=\{\hat{D}(\t): \emptyset\neq \hat{D}(\t)\subseteq D(\t), \forall\t\in \R\}\in \D$.

\begin{definition}\label{def1}
A family $\Phi=\{\phi(t, \t): t\in \R^+, \t\in \R\}$ of mappings is called a mean random dynamical system
on $L^q(\O, \F; X)$ over $(\O, \F, \{\F_t\}_{t\in \R}, \P)$ if for all $\t\in \R$ and $t, s\in \R^+$,
\begin{enumerate}[(i)]
  \item $\phi(t, \t)$ maps $L^q(\O, \F_\t; X)$ to $L^q(\O, \F_{t+\t}; X)$;
  \item $\phi(0, \t)$ is the identity operator on $L^q(\O, \F_\t; X)$;
  \item $\phi(t+s, \t)=\phi(t, \t+s)\circ \phi(s, \t)$.
\end{enumerate}
\end{definition}

\begin{definition}\label{def2}
A family $\K=\{K(\t): \t\in \R\}\in \D$ is called $\D$-pullback absorbing set for $\Phi$
on $L^q(\O, \F; X)$ over $(\O, \F, \{\F_t\}_{t\in \R}, \P)$ if for every $\t\in \R$ and
$D\in \D$, there exists $T=T(\t, D)>0$ such that
$$\phi(t, \t-t)(D(\t-t))\subseteq K(\t)\quad \text{for all}\ t\ge T.$$
Furthermore, for every $\t\in \R$, $K(\t)$ is a weakly compact nonempty subset of $L^q(\O, \F_\t; X)$,
then we call $\K=\{K(\t): \t\in \R\}$ a weakly compact $\D$-pullback
absorbing set for $\Phi$.
\end{definition}

\begin{definition}\label{def3}
A family $\K=\{K(\t): \t\in \R\}\in \D$ is called $\D$-pullback weakly attracting set for $\Phi$
on $L^q(\O, \F; X)$ over $(\O, \F, \{\F_t\}_{t\in \R}, \P)$ if for every $\t\in \R$,
$D\in \D$ and every weak neighborhood $\mathscr{N}^{\rm w}(K(\t))$ of $K(\t)$ in $L^q(\O, \F_\t; X)$,
then there exists $T=T(\t, D, \mathscr{N}^{\rm w}(K(\t)))>0$ such that for all $t\ge T$,
$$\phi(t, \t-t)(D(\t-t))\subseteq \mathscr{N}^{\rm w}(K(\t)).$$
\end{definition}

\begin{definition}\label{def4}
A family $\A=\{\A(\t): \t\in \R\}\in \D$ is called a weak $\D$-pullback mean random attractor
for $\Phi$ on $L^q(\O, \F; X)$ over $(\O, \F, \{\F_t\}_{t\in \R}, \P)$ if for all $\t\in \R$,
\begin{enumerate}[(a)]
  \item $\A(\t)$ is a weakly compact subset of $L^q(\O, \F_\t; X)$;
  \item $\A$ is a $\D$-pullback weakly attracting set of $\Phi$;
  \item $\A$ is the minimal one among $\D$ with both properties (a) and (b).
\end{enumerate}
\end{definition}

The next result was established to guarantee the existence and uniqueness
of weak pullback mean random attractors in $L^q(\O, \F; X)$ over filtered probability space
$(\O, \F, \{\F_t\}_{t\in \R}, \P)$.

\begin{proposition}[See \cite{Wang18a, Wang18b}]\label{prop2}
Assume $X$ is a reflexible Banach space and $q\in (1, \i)$. Let $\D$ be an inclusion-closed
collection of some families of nonempty bounded subsets of $L^q(\O, \F; X)$ as given in
\eqref{sub} and $\Phi$ be a mean random dynamical system on $L^q(\O, \F; X)$ over $(\O, \F, \{\F_t\}_{t\in \R}, \P)$.
If $\Phi$ has a weakly compact $\D$-pullback absorbing set $\K\in \D$ on
$L^q(\O, \F; X)$ over $(\O, \F, \{\F_t\}_{t\in \R}, \P)$, then $\Phi$ has a unique
weak $\D$-pullback mean random attractor $\A\in \D$ with $\A$ given by, for each $\t\in \R$,
$$\A(\t)=\bigcap_{r\ge 0}\overline{\bigcup_{t\ge r}\phi(t, \t-t)(K(\t-t))}^{\rm w},$$
where the closure is taken with respect to the weak topology of $L^q(\O, \F_\t; X)$.
\end{proposition}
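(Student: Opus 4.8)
The plan is to construct the candidate attractor $\A$ by the standard $\omega$-limit-set formula adapted to the weak topology, namely $\A(\t)=\bigcap_{r\ge 0}\overline{\bigcup_{t\ge r}\phi(t,\t-t)(K(\t-t))}^{\rm w}$, and then to verify in turn that (i) each $\A(\t)$ is weakly compact and nonempty, (ii) $\A$ lies in $\D$, (iii) $\A$ is $\D$-pullback weakly attracting, and (iv) $\A$ is minimal, whence unique. The key structural input is that $X$ is reflexive and $q\in(1,\i)$, so by a theorem of type Milman--Pettis the Bochner space $L^q(\O,\F_\t;X)$ is reflexive; consequently its closed bounded convex sets are weakly compact and, more importantly, bounded sets are relatively weakly compact and the weak topology on any bounded set is metrizable (since $L^q(\O,\F_\t;X)$ is separable when $X$ is, or in any case one restricts to the separable closed subspace generated by the relevant orbits). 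This metrizability is what lets me use sequential arguments for weak convergence throughout.

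First I would record the preliminary facts: since $\K\in\D$, each $K(\t-t)$ is bounded in $L^q(\O,\F_{\t-t};X)$, and because $\phi(t,\t-t)$ maps into $L^q(\O,\F_\t;X)$ the sets $\phi(t,\t-t)(K(\t-t))$ are subsets of $L^q(\O,\F_\t;X)$; their union over $t\ge r$ is bounded (using that $\K$ is absorbing, for $t$ large $\phi(t,\t-t)(K(\t-t))\subseteq K(\t)$, so only finitely-many-in-spirit "small $t$" terms need a crude bound, or one simply notes $\K$ absorbs itself). Hence $\overline{\bigcup_{t\ge r}\phi(t,\t-t)(K(\t-t))}^{\rm w}$ is a weakly compact set (closed subset of a weakly compact set, by reflexivity), and these sets are nonempty and nested decreasing in $r$; by the finite intersection property $\A(\t)=\bigcap_{r\ge0}(\cdot)$ is nonempty and weakly compact, giving property (a) of Definition 1.6. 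For (b), the weak attraction, I would argue by contradiction: if some $D\in\D$ and some weak neighbourhood $\mathscr N^{\rm w}(\A(\t))$ failed to absorb, there would be $t_n\to\i$ and $u_n\in\phi(t_n,\t-t_n)(D(\t-t_n))$ with $u_n\notin\mathscr N^{\rm w}(\A(\t))$; using that $\K$ pullback-absorbs $D$, for large $n$ one has $\phi(t_n,\t-t_n)(D(\t-t_n))\subseteq\phi(s,\t-s)(K(\t-s))$ after the cocycle splitting $\phi(t_n,\t-t_n)=\phi(t_n-T,\t-(t_n-T))\circ\phi(T,\t-t_n)$ with $T=T(\t,D)$ — so $u_n\in\bigcup_{t\ge r}\phi(t,\t-t)(K(\t-t))$ for every fixed $r$ once $n$ is large, hence $u_n$ lies in the weakly compact set $\overline{\bigcup_{t\ge 0}\phi(t,\t-t)(K(\t-t))}^{\rm w}$; extracting a weakly convergent subsequence $u_{n_j}\rp u_\ast$, one checks $u_\ast\in\A(\t)$ (because $u_{n_j}$ eventually lies in each tail closure), contradicting $u_n\notin\mathscr N^{\rm w}(\A(\t))$.

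Then I would handle $\A\in\D$: since $\D$ is inclusion-closed and $\A(\t)\subseteq\overline{\bigcup_{t\ge0}\phi(t,\t-t)(K(\t-t))}^{\rm w}$, it suffices to show this enveloping family belongs to $\D$ or is dominated by a member — one uses that $\K$ absorbs itself to see the enveloping family is eventually inside $\K(\t)$ and a bounded perturbation otherwise, so it sits inside $\D$ by the inclusion-closedness and whatever "some conditions" in \eqref{sub} are (these are inherited, as in \cite{Wang18a, Wang18b}); then $\A\in\D$ by inclusion-closedness again. For minimality (c), suppose $\widetilde\A=\{\widetilde\A(\t)\}\in\D$ also satisfies (a) and (b); I would show $\A(\t)\subseteq\widetilde\A(\t)$ by taking $v\in\A(\t)$, writing $v$ as a weak limit of $v_n\in\phi(t_n,\t-t_n)(K(\t-t_n))$ with $t_n\to\i$, noting $\K\in\D$ so the weak attraction of $\widetilde\A$ forces $v_n$ to enter every weak neighbourhood of $\widetilde\A(\t)$, and since $\widetilde\A(\t)$ is weakly compact (hence weakly closed) the weak limit $v$ must lie in $\widetilde\A(\t)$. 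Uniqueness is then immediate: two weak $\D$-pullback mean random attractors are each minimal, so each contains the other.

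The main obstacle I expect is not any single step but the careful bookkeeping of \emph{weak} topologies in the infinite-dimensional Bochner space: specifically, justifying that the relevant orbit unions are bounded (which hinges on invoking the absorbing property correctly, since a priori $\phi(t,\t-t)(K(\t-t))$ for small $t$ need not be controlled) and justifying the sequential extraction of weakly convergent subsequences, which requires the metrizability of the weak topology on bounded sets — valid here because of reflexivity and (after passing to a separable subspace) separability. Everything else is the classical attractor construction transported verbatim to the mean-random, weak-topology setting, exactly as carried out in \cite{Wang18a, Wang18b}; since the statement explicitly attributes the result to those papers, the proof here will essentially cite and recapitulate their argument in the present notation.
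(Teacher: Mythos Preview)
The paper does not supply its own proof of this proposition: it is stated with the attribution ``See \cite{Wang18a, Wang18b}'' and no proof environment follows. Your outline is essentially the argument carried out in those references, so there is nothing substantive to compare.

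Two small remarks on your sketch. First, your worry about metrizability of the weak topology on bounded sets is unnecessary: by the Eberlein--\v{S}mulian theorem, in any Banach space relative weak compactness and relative weak sequential compactness coincide, so once you know the orbit tails are bounded (hence relatively weakly compact, by reflexivity of $L^q(\O,\F_\t;X)$), sequential extraction is automatically legitimate without invoking separability. Second, the boundedness of $\bigcup_{t\ge r}\phi(t,\t-t)(K(\t-t))$ is cleaner than you suggest: since $\K$ absorbs itself, there is $T=T(\t,\K)$ with $\phi(t,\t-t)(K(\t-t))\subseteq K(\t)$ for all $t\ge T$, so for $r\ge T$ the entire union already sits inside $K(\t)$; because the intersection defining $\A(\t)$ is over all $r\ge 0$ of a decreasing family, one may simply work with $r\ge T$, and in particular $\A(\t)\subseteq K(\t)$, which immediately gives $\A\in\D$ by inclusion-closedness. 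With these simplifications your argument is complete and matches the cited source.
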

\section{Setup and Main Results}\label{sect2}
Let
$$V\subset H\equiv H^*\subset V^*$$
be a Gelfand triple, that is, $H$ is a separable Hilbert space with inner
product $\langle, \rangle_H$ and is identified to its dual $H^*$ due to the Riesz isomorphism,
$V$ is a reflective Banach space such that $V\subset H$ continuously and densely, and denote
$_{V^*}\langle,\rangle_V$ the dualization between $V^*$ and its dual $V$.
In particular, there is a constant $\l>0$ such that
\begin{equation} \label{cont1}
\|v\|_V^2\ge \l \|v\|_H^2\quad \text{for all}\quad v\in V,
\end{equation}
which also implies that when $\a>2$, there exists constant $C_\a>0$ such that
\begin{equation} \label{cont2}
C_\a+\|v\|_V^\a\ge\l\|v\|_H^2.
\end{equation}
Let $(\O, \F, \{\F_t\}_{t\in \R}, \P)$ be a complete filtered probability space
enjoying the usual properties, that is, $\{\F_t\}_{t\in \R}$ is an increasing right continuous
family of sub-$\sigma$-algebras of $\F$ that contains all $\P$-null sets.

Let $\oo$ be a bounded domain in $\R^n$ with smooth boundary $\partial \oo$ and the
Lebesgue measure of the domain $|\oo|\neq 0$.
For $\t\in \R$, consider the following non-autonomous stochastic evolution equations on
$\oo\times (\t, \infty)$ of type
\begin{equation} \label{sys2}
\left\{
\begin{array}{l}\vspace{2mm}
du=\big(A(t, u)+F(u)+g(t, x)\big)dt+\e G(t, u)dW(t),\ x\in \oo,
\\
u(\t)=u_0\in L^2(\O, \F_\t; H),
\end{array}
\right.
\end{equation}
with $W(t)$ a two-sided cylindrical $\mathcal{Q}$-Wiener process with respect to
the filtration $\{\F_t\}_{t\in \R}$, where $\Q$ is the identity operator on another separable Hilbert
space $U$ and with $G$ taking values in $L_2(U, H)$ as denoting the Hilbert space consisting of all Hilbert-Schmidt operators
from $U$ to $H$, but with $A$ taking values in $V^*$. The constant $\e\in (0, 1]$, $g\in L^2_{\rm loc}(\R, H)$ and the stochastic
term is understood in the sense of It\^{o}'s integration.

{\bf (H0)}\ Assume that $F: H\r H$ is a Lipschitz continuous function
with Lipschitz constant $\g_1$ and for all $u\in H$,
\begin{equation} \label{f1}
\|F(u)\|_H\le \g_2(1+\frac 12\|u\|_H),\ \text{with}\ \g_2>0.
\end{equation}

Let
\begin{equation*} \label{sysa1}
A: \R\times V\times \O\r V^*, \quad
G: \R\times V\times \O\r L_2(U, H)
\end{equation*}
are maps such that $A(\cdot, \cdot; \o): \R\times V\r V^*$ and
$G(\cdot, \cdot; \o): \R\times V\r L_2(U, H)$ are respectively, $(\B(\R)\otimes\B(V), \B(V^*))$-
and $(\B(\R)\otimes\B(V), \B(L_2(U, H)))$-measurable for each $\o\in \O$.
We make the following assumptions: for all $v, v_1, v_2\in V$, $\o\in \O$ and $t\in \R$,
\begin{enumerate}[\bf(H1)]
  \item (Hemicontinuity) \ The map
  $$s\mapsto _{V^*}\langle A(t, v_1+s v_2; \o), v\rangle_V$$
  is continuous on $\R$.
  \item (Weak monotonicity)\ There exists $\g_3\in \R$ such that
  \begin{eqnarray*}
\begin{split}
2\g_1\|v_1-v_2\|_H^2&+2_{V^*}\langle A(t, v_1; \o)-A(t, v_2; \o), v_1-v_2\rangle_V\\
&+\|G(t, v_1)-G(t, v_2)\|^2_{L_2(U, H)}\le \g_3\|v_1-v_2\|_H^2.
\end{split}
\end{eqnarray*}
  \item (Coercivity) There exist $\a\ge 2$, $\g_4\in \R$, $\g_5>0$,  and $h_1\in L_{\rm loc}^1(\R)$ such that
  \begin{equation*}
2_{V^*}\langle A(t, v; \o), v\rangle_V+\|G(t, v)\|^2_{L_2(U, H)}\le \g_4\|v\|_H^2-3\g_5\|v\|_V^\a+h_1(t).
\end{equation*}

\item (Growth)\ There exist $\g_6>0$ and $h_2\in L_{\rm loc}^{\frac \a{\a-1}}(\R)$ such that
\begin{equation*}
\|A(t, v; \o)\|_{V^*}\le \g_6\|v\|_V^{\a-1}+h_2(t).
\end{equation*}

\item For the existence of weak pullback attractors, we further assume that
 \begin{equation}\label{h3a1}
\frac{\g_2+|\g_4|}{\g_5}<\l.
\end{equation}

\end{enumerate}

\begin{remark}
By {\bf(H3)} and {\bf(H4)} we know that for all $v\in V$,
\begin{equation}\label{h5}
\begin{aligned}
\|G(t, v)\|&^2_{L_2(U, H)}\le \g_4\|v\|_H^2+(2\g_6-3\g_5)\|v\|_V^{\a}+2h_2(t)\|v\|_V+h_1(t)\\
&\le \g_4\|v\|_H^2+2(\g_6-\g_5)\|v\|_V^{\a}+C_{\g_5, \a}|h_2(t)|^{\frac \a{\a-1}}+h_1(t)\\
&\le \g_4\|v\|_H^2+2\g_6\|v\|_V^{\a}+C_{\g_5, \a}|h_2(t)|^{\frac \a{\a-1}}+h_1(t).
\end{aligned}
\end{equation}
\end{remark}

\begin{definition}\label{solu2}
Let $\t\in \R$. We say that an $H$-valued $\{\F_t\}_{t\in \R}$-adapted
process $\{u(t)\}_{t\in [\t, \i)}$ is a solution to problem
\eqref{sys2} if
$u(\cdot; \o)\in L_{\rm loc}^2([\t, \i), H)\cap L_{\rm loc}^\a((\t, \i), V)$ and $\P$-a.s.
\begin{eqnarray*}
\begin{split}
u(t)
=u_0+&\int_\t^t \big(A(s, u(s); \o)+F(u(s))+g(s)\big)ds\\
&\qquad+\int_\t^t \e G(s, u(s); \o)dW(s), \quad t>\t.
\end{split}
\end{eqnarray*}
\end{definition}

Based on \cite[Theorem 4.2.4]{PR07}, we know that for every $\t\in \R$ and
$u_0\in L^2(\O, \F_\t; H)$, system \eqref{sys2}
has a unique solution $u$ in the sense of Definition \ref{solu2} under the
assumptions {\bf (H0)-(H4)}
and it satisfies, for every $T>0$,
\begin{equation} \label{s2a1}
\E\Big(\int_\t^{\t+T}\|u(s)\|^\a_Vds+\sup_{t\in [\t, \t+T]}\|u(t)\|^2_H\Big)<\i.
\end{equation}
Now, let $\phi$ be a mapping from
$\R^+\times \R\times L^2(\O, H)$ to $L^2(\O, H)$ given by
$$\phi(t, \t) (u_0)=u(t+\t, \t, u_0),$$
where $u$ is the solution of system \eqref{sys2}
with initial data $u_0\in L^2(\O, \F_\t; H)$. Then, we know that
$\Phi=\{\phi(t, \t): t\in \R^+, \t\in \R\}$ is actually a mean random dynamical system on $L^2(\O, \F; H)$
over $(\O, \F, \{\F_t\}_{t\in \R}, \P)$.

We will use $\mathcal{D}$ to denote the collection of all families of
nonempty bounded subsets of $L^2(\O, \F_\t; H)$, i.e.,
\begin{eqnarray*}
\begin{split}
\mathcal{D}=\{D=\{D(\t)\subseteq L^2(\O, \F_\t; H): &D(\t)\neq \emptyset \ \text{bounded},\ \t\in \R\}:\\
&\lim_{\t\r-\i}e^{\l\g_5 \t}\|D(\t)\|^2_{L^2(\O, \F_\t; H)}=0\}.
\end{split}
\end{eqnarray*}
In the sequel, we further assume
\begin{equation}\label{g2}
\int_{-\i}^\t e^{\l\g_5 s}(\|g(s)\|_H^2+|h_1(s)|+|h_2(s)|^{\frac \a{\a-1}})ds<\i, \quad\forall \t\in \R.
\end{equation}

We first derive the following uniform estimates for problem \eqref{sys2}.

\begin{lemma}\label{est1}
Suppose {\bf (H0)-(H5)} and \eqref{g2} hold. Then there exists $\e_0>0$ such that
for every $\e\in(0, \e_0]$ and for every $\t\in \R$ and $D=\{D(t)\}_{t\in \R}\in \mathcal{D}$,
there exists $T=T(\t, D)>0$ such that for all $t\ge T$,
\begin{eqnarray*}
\begin{split}
\E (\|u(&\t, \t-t, u_0)\|^2)\\
&\le L+Le^{-\l\g_5 \t}\int_{-\i}^\t e^{\l\g_5 s}(\|g(s)\|_H^2+|h_1(s)|+|h_2(s)|^{\frac \a{\a-1}})ds,
\end{split}
\end{eqnarray*}
where $u_0\in D(\t-t)$ and $L$ is a positive constant independent of $\t$ and $D$.
\end{lemma}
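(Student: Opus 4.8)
The plan is to run an energy estimate built on It\^o's formula for the square of the $H$-norm of the solution. Fix $\t\in\R$, $D=\{D(s)\}_{s\in\R}\in\mathcal D$, $u_0\in D(\t-t)$, and write $u(s)=u(s,\t-t,u_0)$; by \eqref{s2a1} this solution satisfies $\E\big(\int\|u(s)\|_V^\a\,ds+\sup\|u(s)\|_H^2\big)<\i$ on every finite time interval. Applying It\^o's formula to $s\mapsto\|u(s)\|_H^2$ in the variational framework (see \cite{PR07}) and taking expectations --- the It\^o integral becomes a zero-mean martingale after localizing by the stopping times $\t_k=\inf\{s:\|u(s)\|_H\ge k\}$ and sending $k\r\i$, while the drift integrand lies in $L^1_{\rm loc}$ by {\bf(H4)}, \eqref{h5} and \eqref{f1} --- one obtains, for a.e.\ $s$,
$$\frac{d}{ds}\E\|u(s)\|_H^2=\E\Big(2{}_{V^*}\langle A(s,u(s)),u(s)\rangle_V+\e^2\|G(s,u(s))\|_{L_2(U, H)}^2\Big)+2\E\langle F(u(s)),u(s)\rangle_H+2\E\langle g(s),u(s)\rangle_H .$$

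I would then bound the right-hand side term by term. By {\bf(H3)}, $2{}_{V^*}\langle A(s,u),u\rangle_V\le\g_4\|u\|_H^2-3\g_5\|u\|_V^\a+h_1(s)-\|G(s,u)\|_{L_2(U, H)}^2\le\g_4\|u\|_H^2-3\g_5\|u\|_V^\a+h_1(s)$, and by \eqref{h5}, $\e^2\|G(s,u)\|_{L_2(U, H)}^2\le\e^2\big(\g_4\|u\|_H^2+2\g_6\|u\|_V^\a+C_{\g_5, \a}|h_2(s)|^{\frac{\a}{\a-1}}+h_1(s)\big)$; adding these, the coercive term is diminished only by $2\e^2\g_6\|u\|_V^\a$. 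By {\bf(H0)} and Young's inequality, $2\langle F(u),u\rangle_H\le(2\g_2+\d)\|u\|_H^2+\g_2^2\d^{-1}$, and $2\langle g(s),u\rangle_H\le\d\|u\|_H^2+\d^{-1}\|g(s)\|_H^2$. Finally I would trade the remaining coercive term for an $\|u\|_H^2$--term using \eqref{cont1} when $\a=2$ and \eqref{cont2} when $\a>2$, namely $-(3\g_5-2\e^2\g_6)\|u\|_V^\a\le-(3\g_5-2\e^2\g_6)\l\|u\|_H^2+(3\g_5-2\e^2\g_6)C_\a$. Collecting everything,
$$\frac{d}{ds}\E\|u(s)\|_H^2\le\big[(1+\e^2)\g_4+2\g_2+2\d-(3\g_5-2\e^2\g_6)\l\big]\,\E\|u(s)\|_H^2+\vs(s),$$
where $\vs(s):=C\big(\|g(s)\|_H^2+|h_1(s)|+|h_2(s)|^{\frac{\a}{\a-1}}+1\big)$ for a suitable constant $C$. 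Since \eqref{h3a1} forces $|\g_4|+2\g_2<\l\g_5+\g_2<2\l\g_5$, we have $\g_4+2\g_2-3\g_5\l<-\l\g_5$ strictly; hence, first fixing $\d$ small and then $\e_0$ small, the bracket is $\le-\l\g_5$ for every $\e\in(0,\e_0]$, so that $\frac{d}{ds}\E\|u(s)\|_H^2\le-\l\g_5\,\E\|u(s)\|_H^2+\vs(s)$.

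The last step is Gronwall's inequality on $[\t-t,\t]$ together with a pullback translation. Integrating the differential inequality gives
$$\E\|u(\t,\t-t,u_0)\|_H^2\le e^{-\l\g_5 t}\,\E\|u_0\|_H^2+e^{-\l\g_5\t}\int_{\t-t}^{\t}e^{\l\g_5 s}\vs(s)\,ds .$$
The constant part of $\vs$ contributes at most $C(\l\g_5)^{-1}$ after integration, while the remainder is bounded by $Ce^{-\l\g_5\t}\int_{-\i}^{\t}e^{\l\g_5 s}\big(\|g(s)\|_H^2+|h_1(s)|+|h_2(s)|^{\frac{\a}{\a-1}}\big)ds$, which is finite by \eqref{g2}. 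For the initial datum, since $u_0\in D(\t-t)$ and $D\in\mathcal D$,
$$e^{-\l\g_5 t}\,\E\|u_0\|_H^2\le e^{-\l\g_5\t}\,e^{\l\g_5(\t-t)}\,\|D(\t-t)\|^2_{L^2(\O, \F_{\t-t}; H)}\;\r\;0\qquad(t\r\i),$$
so this term is $\le 1$ once $t\ge T(\t,D)$. Taking $L$ to be a large enough multiple of the constants produced above --- which depend only on the structural data, not on $\t$ or $D$ --- gives the asserted estimate.

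The step I expect to be the main obstacle is the constant bookkeeping in the second paragraph: one must combine {\bf(H3)}, \eqref{h5}, {\bf(H0)} and the embeddings \eqref{cont1}--\eqref{cont2} so that the net coefficient of $\E\|u\|_H^2$ does not exceed $-\l\g_5$. This is precisely where the structural assumption \eqref{h3a1} is used, to keep strict room below $-\l\g_5$, and where $\e_0$ must be chosen small, so that the $2\e^2\g_6\|u\|_V^\a$ loss of dissipation does not consume that room. The It\^o formula and the vanishing of the martingale term (via the localization above), the Gronwall step, and the pullback change of variables using \eqref{g2} are otherwise routine.
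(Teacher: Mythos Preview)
Your proposal is correct and follows essentially the same route as the paper: It\^o's formula for $\|u\|_H^2$, term-by-term estimation via {\bf(H0)}, {\bf(H3)}, \eqref{h5} and the embedding \eqref{cont1}--\eqref{cont2}, then Gronwall on $[\t-t,\t]$ and the pullback limit using $D\in\D$ and \eqref{g2}. The only notable difference is cosmetic: the paper chooses specific Young-inequality weights so that the coefficient in front of $\E\|u\|_H^2$ comes out to be \emph{exactly} $-\l\g_5$ and produces the explicit threshold $\e_0=\min\{1,\sqrt{(\l\g_5-\g_2-|\g_4|)/(4\l\g_6+|\g_4|)}\}$, whereas you argue existentially (first $\d$ small, then $\e_0$ small) from the strict inequality $\g_4+2\g_2-3\l\g_5<-\l\g_5$ guaranteed by \eqref{h3a1}; either way yields the same differential inequality and hence the lemma.
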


\begin{proof}
Applying It\^{o}'s formula to \eqref{sys2}, we obtain for $r\ge \t-t$,
\begin{eqnarray*}
\begin{split}
\|u(r, &\ \t-t, u_0)\|_H^2\\
&=\|u_0\|_H^2+2\int_{\t-t}^r {_{V^*}\langle A(s, u(s)), u(s, \t-t, u_0)\rangle_V}ds\\
&\quad+2\int_{\t-t}^r\langle F(u(s)), u(s, \t-t, u_0)\rangle_Hds\\
&\quad+2\int_{\t-t}^r \langle g(s), u(s, \t-t, u_0)\rangle_H ds\\
&\quad+\int_{\t-t}^r \e^2\|G(s, u(s, \t-t, u_0))\|^2_{L_2(U, H)}ds\\
&\quad+2\int_{\t-t}^r\langle u(s, \t-t, u_0), \e G(s, u(s, \t-t, u_0))dW(s)\rangle_H,
\end{split}
\end{eqnarray*}
which implies that
\begin{eqnarray*}
\begin{split}
\E(\|u(r, &\ \t-t, u_0)\|_H^2)\\
&=\E(\|u_0\|_H^2)+2\int_{\t-t}^r \E({_{V^*}\langle A(s, u(s)), u(s, \t-t, u_0)\rangle_V})ds\\
&\quad+2\int_{\t-t}^r\E(\langle F(u(s)), u(s, \t-t, u_0)\rangle_H)ds\\
&\quad+2\int_{\t-t}^r \E(\langle g(s), u(s, \t-t, u_0)\rangle_H) ds\\
&\quad+\int_{\t-t}^r\e^2\E(\|G(s, u(s, \t-t, u_0))\|^2_{L_2(U, H)})ds.
\end{split}
\end{eqnarray*}
Thus, we know that, for almost all $r\ge \t-t$,
\begin{eqnarray}\label{est1a0}
\begin{split}
\frac d{dr}\E(\|&u(r, \t-t, u_0)\|_H^2)=2\E({_{V^*}\langle A(r, u(r)), u(r, \t-t, u_0)\rangle_V})\\
&+2\E(\langle F(u(r)), u(r, \t-t, u_0)\rangle_H)+2\E(\langle g(r), u(r, \t-t, u_0)\rangle_H) \\
&\quad+\e^2\E(\|G(r, u(r, \t-t, u_0))\|^2_{L_2(U, H)}).
\end{split}
\end{eqnarray}
First,  by {\bf (H0)}, we see
\begin{eqnarray*}
\begin{split}
\langle F(u), u\rangle_H&\le \|F(u)\|_H\|u\|_H
\le \g_2\|u\|_H+\frac 12\g_2\|u\|^2_H\\
&\le \frac 1{8}(\l\g_5-\g_2-|\g_4|)\|u\|_H^2+\frac {2\g_2^2}{\l\g_5-\g_2-|\g_4|}+\frac 12\g_2\|u\|_H^2,
\end{split}
\end{eqnarray*}
which implies that
\begin{eqnarray}\label{est1a1}
\begin{split}
2\E(\langle F(u(r)), u(r)\rangle_H)
\le &\frac 14(\l\g_5+3\g_2-|\g_4|)\E(\|u(r, \t-t, u_0)\|_H^2)\\
&+C_{\l,\g_2,\g_4,\g_5}.
\end{split}
\end{eqnarray}
By the Young inequality, we get
\begin{eqnarray}\label{est1a2}
\begin{split}
2&\E(\langle g(r), u(r, \t-t, u_0)\rangle_H) \\
&\le \frac 1{4}(\l\g_5-\g_2-|\g_4|)\E(\|u(r, \t-t, u_0)\|_H^2)+\frac {4}{\l\g_5-\g_2-|\g_4|}\|g(r)\|_H^2.
\end{split}
\end{eqnarray}
Denote
\begin{equation}\label{intensity}
\e_0=\min\left\{1, \sqrt{\frac{\l\g_5-\g_2-|\g_4|}{4\l\g_6+2|\g_4|}}\right\}.
\end{equation}
Then for all $\e\in(0, \e_0]$, by ({\bf H3}) and \eqref{h5} we get
\begin{eqnarray*}
\begin{split}
&2\E({_{V^*}\langle A(r, u(r)), u(r, \t-t, u_0)\rangle_V})+\e^2\E\|G(r, u(r, \t-t, u_0))\|^2_{L_2(U, H)}\\
&\le -2\g_5\E(\|u(r, \t-t, u_0)\|_V^\a)+\g_4\E\|u(r, \t-t, u_0)\|_H^2\\
&\qquad+\e^2\g_4\E\|u(r, \t-t, u_0)\|_H^2+2\e^2\g_6\E(\|u(r, \t-t, u_0)\|_V^\a)\\
&\qquad+C_{\g_5, \a}|h_2(r)|^{\frac \a{\a-1}}
+2|h_1(r)|\\
&\le  -2\g_5\E(\|u(r, \t-t, u_0)\|_V^\a)+\frac{(\l\g_5-\g_2-|\g_4|)\g_6}{2\l\g_6+|\g_4|}\E(\|u(r, \t-t, u_0)\|_V^\a)\\
&\qquad+\left(\frac{(\l\g_5-\g_2-|\g_4|)|\g_4|}{4\l\g_6+2|\g_4|}+|\g_4|\right)\E(\|u(r, \t-t, u_0)\|_H^2)\\
&\qquad+C_{\g_5, \a}|h_2(r)|^{\frac \a{\a-1}}
+2|h_1(r)|,
\end{split}
\end{eqnarray*}
which along with \eqref{est1a0}-\eqref{est1a2} and \eqref{cont2} shows that for almost all $r\ge \t-t$,
\begin{eqnarray}\label{est1a3}
\begin{split}
\frac d{dr}&\E(\|u(r, \t-t, u_0)\|_H^2)\\
&\qquad+\left(2\g_5-\frac{(\l\g_5-\g_2-|\g_4|)\g_6}{2\l\g_6+|\g_4|}\right)\l\E(\|u(r, \t-t, u_0)\|_H^2)\\
&\le \frac 1{2}\left(\l\g_5+\g_2+|\g_4|+\frac{(\l\g_5-\g_2-|\g_4|)|\g_4|}{2\l\g_6+|\g_4|}\right)\E(\|u(r, \t-t, u_0)\|_H^2)\\
&\quad+C_{\l, \g_2, \g_4, \g_5}+C_{\l, \g_2, \g_4, \g_5}\|g(r)\|_H^2+C_{\g_5, \a}|h_2(r)|^{\frac \a{\a-1}}+2|h_1(r)|.
\end{split}
\end{eqnarray}
Obviously, we see
\begin{equation*}
\begin{aligned}
&\left(2\g_5-\frac{(\l\g_5-\g_2-|\g_4|)\g_6}{2\l\g_6+|\g_4|}\right)\l\\
&\qquad\qquad-\frac 1{2}\left(\l\g_5+\g_2+|\g_4|+\frac{(\l\g_5-\g_2-|\g_4|)|\g_4|}{2\l\g_6+|\g_4|}\right)
=\l\g_5.
\end{aligned}
\end{equation*}
Now, by \eqref{est1a3} we obtain, for almost all $r\ge \t-t$,
\begin{eqnarray}\label{est1a4}
\begin{split}
&\frac d{dr}\E(\|u(r, \t-t, u_0)\|_H^2)+\l\g_5\E(\|u(r, \t-t, u_0)\|_H^2)\\
\le &C_{\l, \g_2, \g_4, \g_5}+C_{\l, \g_2, \g_4, \g_5}\|g(r)\|_H^2+C_{\g_5, \a}|h_2(r)|^{\frac \a{\a-1}}+2|h_1(r)|.
\end{split}
\end{eqnarray}
By multiplying \eqref{est1a4} by $e^{\l\g_5 r}$ and integrating over $(\t-t, \t)$ with $t>0$ to show
\begin{eqnarray}\label{est1a5}
\begin{split}
\E(\|u(&\t, \t-t, u_0)\|_H^2)\le e^{-\l\g_5 t}\E(\|u_0\|_H^2)+C_{\l, \g_2, \g_4, \g_5}\\
&+C_{\l, \g_2, \g_4, \g_5,\a}e^{-\l\g_5\t}\int_{\t-t}^\t(\|g(r)\|_H^2+|h_2(r)|^{\frac \a{\a-1}}+|h_1(r)|)dr.
\end{split}
\end{eqnarray}
Since $u_0\in D(\t-t)$ and $D=\{D(t)\}_{t\in \R}\in \D$ we see
\begin{equation*}
\begin{aligned}
e^{-\l\g_5 \t}e^{\l\g_5 (\t-t)}\E(\|u_0\|_H^2)\le e^{-\l\g_5 \t}&e^{\l\g_5 (\t-t)}\|D(\t-t)\|^2_{L^2(\O, \F_\t; H)}
\rightarrow 0\\
& \ \text{as}\ t\rightarrow\infty,
\end{aligned}
\end{equation*}
which implies that there exists $T_1=T_1(\t, D)>0$ such that for all $t\ge T_1$,
\begin{eqnarray}\label{est1a6}
e^{-\l\g_5 \t}e^{\l\g_5 (\t-t)}\E(\|u_0\|_H^2)\le 1.
\end{eqnarray}
By \eqref{est1a5}-\eqref{est1a6} we have, for all $t\ge T_1$,
\begin{eqnarray*}
\begin{split}
\E(\|u(&\t, \t-t, u_0)\|_H^2)\le 1+C_{\l, \g_2, \g_4, \g_5}\\
&+C_{\l, \g_2, \g_4, \g_5,\a}e^{-\l\g_5\t}\int_{-\infty}^\t(\|g(r)\|_H^2+|h_2(r)|^{\frac \a{\a-1}}+|h_1(r)|)dr.
\end{split}
\end{eqnarray*}
The proof is complete.
\end{proof}

Based on Lemma \ref{est1}, we establish the existence of weakly compact $\D$-pullback absorbing
set for \eqref{sys2}.

\begin{lemma}\label{absorbing set}
Suppose {\bf (H0)-(H5)} and \eqref{g2} hold. Then there exists $\e_0>0$ such that
for every $\e\in(0, \e_0]$, the random dynamical system $\Phi$  for problem \eqref{sys2}
possesses a weakly compact $\D$-pullback absorbing set $\K=\{K(\t): \t \in \R \}\in \D$,
which is given by, for each $\t\in\R$,
\begin{eqnarray}\label{est2a1}
K(\t)=\{u\in L^2(\O, \F_\t; H): \E(\|u\|^2_H)\le R(\t)\},
\end{eqnarray}
where
\begin{eqnarray*}
R(\t)= L+Le^{-\l\g_5 \t}\int_{-\i}^\t e^{\l\g_5 s}(\|g(r)\|_H^2+|h_1(s)|+|h_2(s)|^{\frac \a{\a-1}})ds,
\end{eqnarray*}
with $L$ being the same constant as in Lemma \ref{est1}.
\end{lemma}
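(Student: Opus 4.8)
The plan is to deduce the statement directly from Lemma \ref{est1}, using the reflexivity of the Bochner space $L^2(\O, \F_\t; H)$ to obtain weak compactness. First I would fix $\t\in\R$ and check that $K(\t)$ defined by \eqref{est2a1} is a weakly compact nonempty subset of $L^2(\O,\F_\t;H)$. Since the integrand in $R(\t)$ is nonnegative and $L>0$, we have $R(\t)\ge L>0$, so $0\in K(\t)$ and $K(\t)\neq\emptyset$; moreover, because $\E(\|u\|_H^2)=\|u\|_{L^2(\O,\F_\t;H)}^2$, the set $K(\t)$ is exactly the closed ball of radius $\sqrt{R(\t)}$ centered at the origin of $L^2(\O,\F_\t;H)$. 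As $H$ is a separable Hilbert space, $L^2(\O,\F_\t;H)$ is a Hilbert space, hence reflexive; a norm-closed, bounded, convex subset of a reflexive Banach space is weakly compact (Kakutani's theorem), and a closed ball has all three properties. This establishes property (a) required of a weakly compact $\D$-pullback absorbing set.

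Next I would verify that $\K=\{K(\t):\t\in\R\}$ belongs to $\D$. By construction $\|K(\t)\|^2_{L^2(\O,\F_\t;H)}=R(\t)$, so
\[
e^{\l\g_5\t}\|K(\t)\|^2_{L^2(\O,\F_\t;H)} = L\,e^{\l\g_5\t} + L\int_{-\i}^{\t} e^{\l\g_5 s}\big(\|g(s)\|_H^2 + |h_1(s)| + |h_2(s)|^{\frac{\a}{\a-1}}\big)\,ds .
\]
Since $\l,\g_5>0$, the first term tends to $0$ as $\t\to-\i$, while the second term is the tail over $(-\i,\t)$ of the integral which is finite for every $\t$ by \eqref{g2}, hence also tends to $0$ as $\t\to-\i$. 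Therefore $\lim_{\t\to-\i} e^{\l\g_5\t}\|K(\t)\|^2_{L^2(\O,\F_\t;H)}=0$, which is precisely the defining condition of $\D$; in particular each $K(\t)$ is nonempty and bounded, so $\K\in\D$.

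Finally I would establish the $\D$-pullback absorbing property of Definition \ref{def2}. Fix $\t\in\R$ and $D=\{D(t)\}_{t\in\R}\in\D$. Applying Lemma \ref{est1} with the same $\e_0$ from \eqref{intensity}, there exists $T=T(\t,D)>0$ such that for all $t\ge T$ and every $u_0\in D(\t-t)$,
\[
\E\big(\|u(\t,\t-t,u_0)\|_H^2\big) \le L + L e^{-\l\g_5\t}\int_{-\i}^{\t} e^{\l\g_5 s}\big(\|g(s)\|_H^2 + |h_1(s)| + |h_2(s)|^{\frac{\a}{\a-1}}\big)\,ds = R(\t).
\]
Since $\phi(t,\t-t)(u_0)=u\big(t+(\t-t),\t-t,u_0\big)=u(\t,\t-t,u_0)$ is $\F_\t$-measurable, this gives $\phi(t,\t-t)(u_0)\in K(\t)$, hence $\phi(t,\t-t)(D(\t-t))\subseteq K(\t)$ for all $t\ge T$. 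Combined with the weak compactness of each $K(\t)$ and $\K\in\D$, this is the assertion of the lemma.

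The only genuinely non-routine point is the weak compactness of $K(\t)$, which relies on the reflexivity of the Bochner space $L^2(\O,\F_\t;H)$ over the Hilbert space $H$ (so that norm-closed bounded convex sets are weakly compact); once this is in place, everything else is bookkeeping with Lemma \ref{est1} and the integrability hypothesis \eqref{g2}. (For the general exponent $q\in(1,\i)$ used in Proposition \ref{prop2} one would instead invoke reflexivity of $L^q(\O,\F_\t;X)$ for reflexive $X$, which holds by the same argument.)
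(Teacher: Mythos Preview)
Your proof is correct and follows essentially the same approach as the paper: weak compactness of $K(\t)$ from its being a closed bounded convex subset of the reflexive space $L^2(\O,\F_\t;H)$, the absorbing property from Lemma~\ref{est1}, and $\K\in\D$ from \eqref{g2}. The paper's proof is simply a terser version of yours, asserting without detail that ``obviously by \eqref{g2}'' one has $\K\in\D$, whereas you spell out the decay computation explicitly.
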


\begin{proof}
Since for each $\t\in \R$, $K(\t)$ defined in \eqref{est2a1} is a bounded closed convex subset of $L^2(\O, \F_\t; H)$,
 and hence it is weakly compact in $L^2(\O, \F_\t; H)$. Moreover, by Lemma \ref{est1}, we know
 that for every $\t\in \R$ and $D=\{D(t)\}_{t\in \R}\in \D$, there exists $T=T(\t, D)>0$ such that for
 all $t\ge T$ and $0<\e \le \e_0$,
$$\Phi(t, \t-t, D(\t-t))\subseteq K(\t). $$
Obviously by \eqref{g2}, we can verify $\K\in \D$. Therefore, $\K$ is a weakly compact
$\D$-pullback absorbing set for $\Phi$.
\end{proof}

Now, by Lemma \ref{absorbing set} and Proposition \ref{prop2}, we obtain the main
result of the existence of weak $\D$-pullback mean random attractor for \eqref{sys2}.

\begin{theorem}\label{main1}
Suppose {\bf (H0)-(H5)} and \eqref{g2} hold. Then there exists $\e_0>0$ such that
for every $\e\in(0, \e_0]$, the random dynamical system $\Phi$  for problem \eqref{sys2}
possesses a unique weak $\D$-pullback mean random attractor $\A=\{\A(\t): \t\in \R\}\in \D$
in $L^2(\O, \F_\t; H)$ over $(\O, \F, \{\F_\t\}_{t\in \R}, \P)$.
\end{theorem}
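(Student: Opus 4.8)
The proof of Theorem~\ref{main1} is essentially an application of the abstract machinery assembled in Sections~\ref{sect1} and~\ref{sect2}, so the plan is to verify that all hypotheses of Proposition~\ref{prop2} are in place and then invoke it directly. First I would recall that $H$ is a separable Hilbert space, hence a reflexive Banach space, so the reflexivity requirement on $X$ in Proposition~\ref{prop2} is met with $X=H$; and the relevant integrability exponent is $q=2\in(1,\i)$, so the restriction $q\in(1,\i)$ is satisfied. Next I would note that $\D$, as defined just before \eqref{g2} to consist of those families of nonempty bounded subsets $D=\{D(\t)\}_{\t\in\R}$ of $L^2(\O,\F_\t;H)$ with $\lim_{\t\r-\i}e^{\l\g_5\t}\|D(\t)\|^2_{L^2(\O,\F_\t;H)}=0$, is an inclusion-closed collection of the form \eqref{sub}: inclusion-closedness is immediate since shrinking each fibre $D(\t)$ to a nonempty subset only decreases the norm $\|D(\t)\|_{L^2(\O,\F_\t;H)}$, so the decay condition is preserved.

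The second ingredient is that $\Phi=\{\phi(t,\t):t\in\R^+,\t\in\R\}$, defined by $\phi(t,\t)(u_0)=u(t+\t,\t,u_0)$, is a genuine mean random dynamical system on $L^2(\O,\F;H)$ over $(\O,\F,\{\F_t\}_{t\in\R},\P)$ in the sense of Definition~\ref{def1}. This was already asserted in the paragraph following \eqref{s2a1}: under {\bf(H0)}--{\bf(H4)} the variational theory of \cite[Theorem~4.2.4]{PR07} gives, for each $\t\in\R$ and $u_0\in L^2(\O,\F_\t;H)$, a unique solution $u$ satisfying the energy bound \eqref{s2a1}, so $\phi(t,\t)$ maps $L^2(\O,\F_\t;H)$ into $L^2(\O,\F_{t+\t};H)$ (property (i)); property (ii) is the trivial fact $\phi(0,\t)=\mathrm{id}$; and property (iii), the cocycle identity $\phi(t+s,\t)=\phi(t,\t+s)\circ\phi(s,\t)$, follows from uniqueness of solutions together with the fact that the solution started at time $\t+s$ from $u(\t+s,\t,u_0)$ coincides with the restriction of the solution started at time $\t$. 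I would state these verifications briefly rather than redo them.

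With the structural hypotheses checked, the core input is Lemma~\ref{absorbing set}: under {\bf(H0)}--{\bf(H5)} and \eqref{g2} there exists $\e_0>0$ such that for every $\e\in(0,\e_0]$ the system $\Phi$ possesses a weakly compact $\D$-pullback absorbing set $\K=\{K(\t):\t\in\R\}\in\D$, explicitly given by \eqref{est2a1}. Feeding $X=H$, $q=2$, $\D$, $\Phi$ and $\K$ into Proposition~\ref{prop2} yields at once that, for every $\e\in(0,\e_0]$, $\Phi$ has a unique weak $\D$-pullback mean random attractor $\A=\{\A(\t):\t\in\R\}\in\D$, with the explicit representation $\A(\t)=\bigcap_{r\ge0}\overline{\bigcup_{t\ge r}\phi(t,\t-t)(K(\t-t))}^{\mathrm w}$, the closure being taken in the weak topology of $L^2(\O,\F_\t;H)$. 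This completes the proof.

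I do not anticipate a genuine obstacle here, since Theorem~\ref{main1} is a corollary of Lemma~\ref{absorbing set} and Proposition~\ref{prop2}; the only points requiring any care are the pedestrian verifications that $\D$ is inclusion-closed and that $\Phi$ satisfies Definition~\ref{def1}, and among those the cocycle property (iii) is the one most worth spelling out, as it rests on uniqueness of variational solutions. Everything hard — the uniform moment estimate, the choice of $\e_0$ in \eqref{intensity}, and the weak compactness of the absorbing ball — has already been carried out in Lemma~\ref{est1} and Lemma~\ref{absorbing set}.
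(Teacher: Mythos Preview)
Your proposal is correct and follows exactly the paper's own approach: the paper does not give a separate proof of Theorem~\ref{main1} but simply states that it follows from Lemma~\ref{absorbing set} and Proposition~\ref{prop2}. Your write-up in fact spells out more of the routine verifications (reflexivity of $H$, inclusion-closedness of $\D$, the cocycle property of $\Phi$) than the paper does, but the logical route is identical.
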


\section{Some Applications}\label{applications}

In this section, we present some  examples of concrete stochastic evolution equations.
Here, we solely focus on $A$ independent of $t$ and take $G\equiv 0$. The latter we do
because of the fact in \cite[Exercise 4.1.2]{PR07} . We also note that
from here examples for $A$ dependent on $(t, \o)$ are then immediate.
We always consider $\oo\subset \R^n$ as an open bounded domain
with smooth boundary $\partial \oo$ and the external forcing term $g\in L^2_{\rm loc}(\R, H)$.

\begin{example}[$H_0^1\subset L^2\subset (H_0^1)^*$ and $A=\Delta$]
Consider the following triple
\begin{equation*}
\begin{aligned}
V:=H_0^1(\oo)\subset L^2(\oo):=H\subset (H_0^1(\oo))^*=H^{-1}(\oo):=V^*
\end{aligned}
\end{equation*}
and the stochastic reaction-diffusion equations
\begin{equation} \label{sys-e0}
\left\{
\begin{array}{l}\vspace{2mm}
du=(\Delta u+F(u)+g(x, t))dt+\e G(u)dW,\quad x\in \oo, \ t>\t,\\
u(x, t)=0, \quad x\in \partial \oo,\ t>\t,\\
u(x, \t)=u_0\in L^2(\O, \F_\t; L^2(\oo)).
\end{array}
\right.
\end{equation}
Let $A(u)=\Delta u$. Similarly to \cite[Example 4.1.7]{PR07},  we can extend $A$
with initial domain $C_0^\infty(\oo)$ to a bounded linear operator $A: V\rightarrow V^*$.
Since $A: V\rightarrow V^*$ is linear, {\bf (H1)} is obviously satisfied.
In this case, we don't need \eqref{h5} due to
$$2_{V^*}\langle A(u), u\rangle_V=-\|\nabla u\|^2_{L^2(\oo)}\le 0.$$
So the conditions given in \eqref{h3a1} and \eqref{intensity} should be adjusted as
\begin{equation}\label{intensity2}
\frac {\g_2}{\g_5}<\l, \quad \tilde\e_0=\min\left\{1, \sqrt{\frac{\l\g_5-\g_2}{4\l\g_6+2|\g_4|}}\right\}.
\end{equation}
Indeed, the $\l$ defined in \eqref{cont1} can be chosen as
\begin{equation*}
\|v\|^2_{L^2(\oo)}\le \sqrt[n]{\frac{|\oo|}{\mu_n}} \|\nabla v\|^2_{L^2(\oo)}\quad \text{for all}\quad v\in H_0^1(\oo),
\end{equation*}
where $\mu_n$ is the volume of unit ball in $\R^n$ in terms of the Gamma function $\Gamma$:
$$   \mu_n=\frac {2\pi^{n/2}}{n\Gamma(\frac n 2)}.$$

Now, take $\g_1\ge 0$, $0<\g_2<\frac 23\l$, $\g_3=2\g_1$, $\g_4=2$, $\g_5=\frac 23$, $\g_6=1$
and $\a=2$, then all the conditions in {\bf (H2)}-{\bf (H4)} and \eqref{intensity2} are satisfied.
Then, we have
\begin{theorem}\label{main2}
Assume that $\g_1\ge 0$, $0<\g_2<\frac 23\l$, $\g_3=2\g_1$, $\g_4=2$, $\g_5=\frac 23$, $\g_6=1$
and $\a=2$ in
{\bf (H0)-(H5)} and \eqref{g2} hold. Then there exists $\tilde\e_0=\min\left\{1, \sqrt{\frac{\frac 23\l-\g_2}{4\l+4}}\right\}$ such that
for every $\e\in(0, \tilde\e_0]$, the random dynamical system $\Phi$  for problem \eqref{sys-e0}
possesses a unique weak $\D$-pullback mean random attractor $\A=\{\A(\t): \t\in \R\}\in \D$
in $L^2(\O, \F_\t; L^2(\oo))$ over $(\O, \F, \{\F_\t\}_{t\in \R}, \P)$.
\end{theorem}

We remark that the similar result has been established in \cite{Wang18a}.
\end{example}

\begin{example}[$L^p\subset L^2\subset L^{p/p-1}$ and $A(u):=-u|u|^{p-2}$]
For $p\ge 2$, consider the following triple
\begin{equation*}
\begin{aligned}
V:=L^p(\oo)\subseteq L^2(\oo):=H\subseteq (L^p(\oo))^*=L^{\frac p{p-1}}(\oo):=V^*
\end{aligned}
\end{equation*}
and the stochastic evolution equation
\begin{equation} \label{sys-e1}
\left\{
\begin{array}{l}\vspace{2mm}
du=(-u|u|^{p-2}+F(u)+g(t, x))dt+\e G(u)dW,\quad x\in \oo, \ t>\t,\\
u(x, t)=0, \quad x\in \partial \oo,\ t>\t,\\
u(x, \t)=u_0\in L^2(\O, \F_\t; L^2(\oo)).
\end{array}
\right.
\end{equation}
Define $A: V\rightarrow V^*$ by
$$A(u):=-u|u|^{p-2}, \quad u\in V.$$
Indeed, $A$ takes values in $V^*$ for all $u\in V$. Furthermore, $A$ satisfies {\bf (H1)-(H4)}.
This can be verified by the same steps as in \cite[Example 4.1.5]{PR07} with
\begin{itemize}
\item $\g_3:=2\g_1$ in {\bf (H2)};
\item $\a:=p$, $\g_4=0$ and $\g_5=\frac 23$ in {\bf (H3)};
\item $\a:=p$ and $\g_6=1$ in {\bf (H4)}.
\end{itemize}
From \eqref{cont1}, we know that for $p>2$,
\begin{equation*}
\|v\|_{L^2(\oo)}^2\le \|v\|_{L^p(\oo)}^2|\oo|^{\frac p{p-2}} \quad \text{for all}\quad v\in L^p(\oo),
\end{equation*}
which implies that we can choose $\g_2>0$ given in \eqref{f1} such that
 \begin{equation*}
\frac{\g_2+|\g_4|}{\g_5}=\frac 32\g_2<\l_0
\Rightarrow \g_2<\frac {2}{3}\l_0,
\end{equation*}
where $\l_0$ depends on $|\oo|$ and $p$.

Then, we have the existence of  weak $\D$-pullback mean random attractor for \eqref{sys-e1}.
\begin{theorem}\label{main2}
Assume that $\g_1\ge 0$, $0<\g_2<\frac {2}{3}\l_0$,
$\g_3=2\g_1$, $\g_4=0$, $\g_5=\frac 23$, $\g_6=1$, $\a=p$ in
{\bf (H0)-(H5)} and \eqref{g2} hold. Then there exists $\tilde\e_0=\min\left\{1, \sqrt{\frac{\frac 23\l_0-\g_2}{4\l_0}}\right\}$ such that
for every $\e\in(0, \tilde\e_0]$, the random dynamical system $\Phi$  for problem \eqref{sys-e1}
possesses a unique weak $\D$-pullback mean random attractor $\A=\{\A(\t): \t\in \R\}\in \D$
in $L^2(\O, \F_\t; L^2(\oo))$ over $(\O, \F, \{\F_\t\}_{t\in \R}, \P)$.
\end{theorem}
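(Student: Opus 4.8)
The plan is to realize the stochastic $p$-Laplace-type equation \eqref{sys-e1} as a special instance of the abstract system \eqref{sys2} and then invoke Theorem \ref{main1} directly. First I would verify that the Gelfand triple $V:=L^p(\oo)\subset L^2(\oo)=:H\subset L^{p/(p-1)}(\oo)=:V^*$ is genuine: $L^p(\oo)\subset L^2(\oo)$ continuously and densely since $\oo$ is bounded (with $|\oo|\neq 0$), and $L^p(\oo)$ is a reflexive Banach space for $p\ge 2$. The continuity constant in \eqref{cont1}--\eqref{cont2} comes from H\"older's inequality, $\|v\|_{L^2(\oo)}^2\le |\oo|^{(p-2)/p}\|v\|_{L^p(\oo)}^2$, which gives $\l=\l_0:=|\oo|^{-(p-2)/p}$ (and for $p=2$ we simply take $\l=1$).

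Next I would check hypotheses \textbf{(H0)}--\textbf{(H5)} one by one. \textbf{(H0)} is assumed on $F$. For $A(u):=-u|u|^{p-2}$ with $G\equiv 0$: hemicontinuity \textbf{(H1)} follows because $s\mapsto {_{V^*}\langle} A(v_1+sv_2),v\rangle_V$ is continuous by dominated convergence, exactly as in \cite[Example 4.1.5]{PR07}; weak monotonicity \textbf{(H2)} holds with $\g_3=2\g_1$ because $_{V^*}\langle A(v_1)-A(v_2),v_1-v_2\rangle_V\le 0$ by the monotonicity of the map $x\mapsto -x|x|^{p-2}$ (so only the $2\g_1\|v_1-v_2\|_H^2$ term survives); coercivity \textbf{(H3)} holds with $\a:=p$, $\g_4=0$, $\g_5=\tfrac23$, $h_1\equiv 0$ since $2{_{V^*}\langle} A(v),v\rangle_V=-2\|v\|_{L^p}^p=-2\|v\|_V^p\le -3\cdot\tfrac23\|v\|_V^p$; growth \textbf{(H4)} holds with $\a=p$, $\g_6=1$, $h_2\equiv 0$ because $\|A(v)\|_{V^*}=\||v|^{p-1}\|_{L^{p/(p-1)}}=\|v\|_{L^p}^{p-1}=\|v\|_V^{p-1}$. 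Finally \textbf{(H5)}, i.e. \eqref{h3a1}, reads $(\g_2+|\g_4|)/\g_5=\tfrac32\g_2<\l_0$, which is precisely the standing assumption $\g_2<\tfrac23\l_0$. Condition \eqref{g2} reduces (since $h_1\equiv h_2\equiv 0$) to $\int_{-\i}^\t e^{\l\g_5 s}\|g(s)\|_H^2\,ds<\i$ for all $\t$, which is assumed.

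With all hypotheses verified, \cite[Theorem 4.2.4]{PR07} gives a unique variational solution to \eqref{sys-e1} satisfying the integrability \eqref{s2a1}, so the solution operators define a mean random dynamical system $\Phi$ on $L^2(\O,\F;H)$ over $(\O,\F,\{\F_t\}_{t\in\R},\P)$ as in Section \ref{sect2}. The intensity threshold from \eqref{intensity} becomes, after substituting $\g_4=0$, $\g_5=\tfrac23$, $\g_6=1$, $\l=\l_0$, exactly $\tilde\e_0=\min\{1,\sqrt{(\tfrac23\l_0-\g_2)/(4\l_0)}\}$. Then for every $\e\in(0,\tilde\e_0]$ Theorem \ref{main1} applies verbatim and yields a unique weak $\D$-pullback mean random attractor $\A=\{\A(\t):\t\in\R\}\in\D$ in $L^2(\O,\F_\t;H)=L^2(\O,\F_\t;L^2(\oo))$, which is the assertion.

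The only genuinely delicate point --- the ``main obstacle'' --- is confirming that $A(u)=-u|u|^{p-2}$ does map $L^p(\oo)$ into $L^{p/(p-1)}(\oo)$ and satisfies the structural estimates with the claimed constants uniformly (in particular that the monotonicity inequality $({_{V^*}\langle} A(v_1)-A(v_2),v_1-v_2\rangle_V)\le 0$ is genuinely an equality-free consequence of convexity and not just a pointwise bound requiring care about integrability); all of this is, however, classical and is carried out in detail in \cite[Example 4.1.5]{PR07}, so the proof is essentially a bookkeeping exercise in matching constants. Everything else --- reflexivity of $V$, the Poincar\'e-type embedding constant, and the smallness condition --- is elementary, and no new estimate beyond those already proved in Lemmas \ref{est1} and \ref{absorbing set} is needed.
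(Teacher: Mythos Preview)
Your proposal is correct and follows precisely the approach of the paper: verify that $A(u)=-u|u|^{p-2}$ on the triple $L^p(\oo)\subset L^2(\oo)\subset L^{p/(p-1)}(\oo)$ satisfies {\bf (H1)}--{\bf (H4)} with the indicated constants (citing \cite[Example~4.1.5]{PR07}), check {\bf (H5)} via the H\"older embedding constant $\l_0$, substitute into \eqref{intensity} to obtain $\tilde\e_0$, and then invoke Theorem~\ref{main1}. Your write-up is in fact more explicit than the paper's own discussion (you spell out the coercivity and growth computations, the reduction of \eqref{g2} when $h_1\equiv h_2\equiv 0$, and the correct H\"older exponent $|\oo|^{(p-2)/p}$), but there is no methodological difference.
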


\end{example}

\begin{example}[$W_0^{1,p}\subset L^2\subset (W_0^{1,p})^*$ and $A=p$-Laplacian]
Again, we take $p\ge 2$, consider the following triple
\begin{equation*}
\begin{aligned}
V:=W_0^{1,p}(\oo)\subseteq L^2(\oo):=H\subseteq (W_0^{1,p}(\oo))^*:=V^*
\end{aligned}
\end{equation*}
and the stochastic $p$-Laplacian equation
\begin{equation} \label{sys-e2}
\left\{
\begin{array}{l}\vspace{2mm}
du={\rm div}(|\nabla u|^{p-2}\nabla u)+F(u)+g(t, x))dt+\e G(u)dW,\quad x\in \oo, \ t>\t,\\
u(x, t)=0, \quad x\in \partial \oo,\ t>\t,\\
u(x, \t)=u_0\in L^2(\O, \F_\t; L^2(\oo)).
\end{array}
\right.
\end{equation}
Define $A: V\rightarrow V^*$ by
$$A(u):={\rm div}(|\nabla u|^{p-2}\nabla u), u\in V.$$
Indeed, $A$ takes values in $V^*$ for all $u\in V$.
Similarly to \cite[Example 4.1.9]{PR07}, {\bf (H1)}  is obvious.
We also have
\begin{equation*}
\tilde\l\|v\|_{L^p(\oo)}\le \|\nabla v\|_{L^p(\oo)}\quad \text{for all}\quad v\in W_0^{1, p}(\oo),
\end{equation*}
where $ \tilde\l=\sqrt[n]{\frac{\mu_n}{|\oo|}}$.
Now, by taking
\begin{itemize}
\item $\g_1\ge 0$, $0<\g_2<\frac{\tilde \l}6\min\{1, \tilde \l\}$ in {\bf (H0)};
\item $\g_3:=2\g_1$ in {\bf (H2)};
\item $\a:=p$, $\g_4=0$ and $\g_5=\frac{1}6\min\{1, \tilde \l\}$ in {\bf (H3)};
\item $\a:=p$ and $\g_6=1$ in {\bf (H4)},
\end{itemize}
we establish the following result:
\begin{theorem}\label{main3}
Assume that $\g_1\ge 0$, $0<\g_2<\frac{\tilde \l}6\min\{1, \tilde \l\}$,
$\g_3=2\g_1$, $\g_4=0$, $\g_5=\frac{1}6\min\{1, \tilde \l\}$, $\g_6=1$, $\a=p$ in
{\bf (H0)-(H5)} and \eqref{g2} hold. Then there exists $\tilde\e_0=\min\left\{1, \sqrt{\frac{\frac{\tilde \l}6\min\{1, \tilde \l\}-\g_2}{4\tilde\l}}\right\}$ such that
for every $\e\in(0, \tilde\e_0]$, the random dynamical system $\Phi$  for problem \eqref{sys-e2}
possesses a unique weak $\D$-pullback mean random attractor $\A=\{\A(\t): \t\in \R\}\in \D$
in $L^2(\O, \F_\t; L^2(\oo))$ over $(\O, \F, \{\F_\t\}_{t\in \R}, \P)$.
\end{theorem}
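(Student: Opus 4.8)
The plan is to check that the concrete nonlinearity $A(u)={\rm div}(|\nabla u|^{p-2}\nabla u)$, together with the Lipschitz term $F$ and the displayed choice of constants, satisfies the abstract hypotheses {\bf (H0)-(H5)} and the integrability condition \eqref{g2} of Section~\ref{sect2}, and then to invoke Theorem~\ref{main1}. Throughout I would work with the Gelfand triple $V=W_0^{1,p}(\oo)\subset L^2(\oo)=H\subset (W_0^{1,p}(\oo))^*=V^*$, equipping $V$ with the norm $\|v\|_V=\|\nabla v\|_{L^p(\oo)}$ (equivalent to the full $W^{1,p}$-norm by the Poincar\'e inequality $\tilde\l\|v\|_{L^p(\oo)}\le\|\nabla v\|_{L^p(\oo)}$, $\tilde\l=\sqrt[n]{\mu_n/|\oo|}$). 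Since $|\oo|<\i$ and $p\ge2$, H\"older's inequality gives $\|v\|_{L^2(\oo)}\le|\oo|^{\frac{p-2}{2p}}\|v\|_{L^p(\oo)}$; combining this with Poincar\'e one obtains \eqref{cont1}, hence \eqref{cont2} for $\a=p$, with the embedding constant $\l$ expressed through $\tilde\l$ (and, for $p>2$, through $|\oo|$). Following the convention of the example I will denote this constant by $\tilde\l$.

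Next I would verify {\bf (H1)-(H4)}, mimicking \cite[Example 4.1.9]{PR07}. For {\bf (H1)}, continuity of $s\mapsto|\nabla(v_1+sv_2)|^{p-2}\nabla(v_1+sv_2)$ in $L^{p/(p-1)}(\oo)$, paired with $\nabla v$, yields hemicontinuity. For {\bf (H2)}, since $-A$ is a monotone operator, $_{V^*}\langle A(v_1)-A(v_2),v_1-v_2\rangle_V\le0$; as $G\equiv0$, the left side of {\bf (H2)} reduces to $2\g_1\|v_1-v_2\|_H^2+2_{V^*}\langle A(v_1)-A(v_2),v_1-v_2\rangle_V\le2\g_1\|v_1-v_2\|_H^2$, so $\g_3=2\g_1$ works. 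For {\bf (H3)}, $_{V^*}\langle A(v),v\rangle_V=-\|\nabla v\|_{L^p(\oo)}^p=-\|v\|_V^p$ and $G\equiv0$, hence $2_{V^*}\langle A(v),v\rangle_V+\|G(t,v)\|^2_{L_2(U,H)}=-2\|v\|_V^p\le-3\g_5\|v\|_V^p$ as soon as $3\g_5\le2$, which holds for $\g_5=\tfrac16\min\{1,\tilde\l\}$; thus {\bf (H3)} holds with $\a=p$, $\g_4=0$, $h_1\equiv0$. For {\bf (H4)}, $\|A(v)\|_{V^*}\le\|\nabla v\|_{L^p(\oo)}^{p-1}=\|v\|_V^{p-1}$, so {\bf (H4)} holds with $\g_6=1$, $h_2\equiv0$. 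The condition {\bf (H0)} on $F$ (Lipschitz with constant $\g_1$ and the growth bound \eqref{f1}) is part of the hypotheses, with $\g_2$ restricted by $0<\g_2<\tfrac{\tilde\l}{6}\min\{1,\tilde\l\}$.

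I would then check the smallness condition {\bf (H5)}: with $\g_4=0$ and $\g_5=\tfrac16\min\{1,\tilde\l\}$ one has $\dfrac{\g_2+|\g_4|}{\g_5}=\dfrac{6\g_2}{\min\{1,\tilde\l\}}<\tilde\l=\l$, which is precisely the standing assumption. Moreover, because $_{V^*}\langle A(v),v\rangle_V\le0$ and $G\equiv0$, the auxiliary estimate \eqref{h5} is not needed, and the forms \eqref{h3a1} and \eqref{intensity} simplify exactly as in the first example, so that the noise threshold collapses to $\tilde\e_0=\min\{1,\sqrt{(\tfrac{\tilde\l}{6}\min\{1,\tilde\l\}-\g_2)/(4\tilde\l)}\}$. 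Having verified {\bf (H0)-(H5)} and \eqref{g2}, \cite[Theorem 4.2.4]{PR07} provides, for each $\t\in\R$ and $u_0\in L^2(\O,\F_\t;L^2(\oo))$, a unique solution of \eqref{sys-e2} with the integrability \eqref{s2a1}, so that $\Phi$ is a mean random dynamical system on $L^2(\O,\F;L^2(\oo))$. For $\e\in(0,\tilde\e_0]$, Lemma~\ref{est1} and Lemma~\ref{absorbing set} produce a weakly compact $\D$-pullback absorbing set, and Proposition~\ref{prop2} (equivalently Theorem~\ref{main1}) yields the unique weak $\D$-pullback mean random attractor $\A\in\D$ in $L^2(\O,\F_\t;L^2(\oo))$.

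Most of the verification is routine and parallels \cite{PR07}; the delicate points are purely bookkeeping of constants -- fixing the norm on $W_0^{1,p}(\oo)$ so that the coercivity constant satisfies $3\g_5\le2$, tracking how the embedding constant $\l$ in \eqref{cont1} relates to the Poincar\'e constant $\tilde\l$ (which brings in the H\"older embedding $L^p\hookrightarrow L^2$ and the measure $|\oo|$ when $p>2$), and confirming that the various thresholds for $\e$ arising in Lemma~\ref{est1} reduce to the stated $\tilde\e_0$ under the present choice of parameters.
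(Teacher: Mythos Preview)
Your proposal is correct and follows the same route as the paper: verify that the $p$-Laplacian fits the abstract framework {\bf (H0)--(H5)} with the listed constants (referring, as the paper does, to \cite[Example~4.1.9]{PR07}) and then invoke Theorem~\ref{main1}. The paper gives only the bare list of constants, while you spell out the hemicontinuity, monotonicity, coercivity and growth checks and the bookkeeping for \eqref{h3a1} and \eqref{intensity}; in particular your identification of the embedding constant in \eqref{cont1} with the Poincar\'e constant $\tilde\l$ mirrors the paper's convention.
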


\end{example}

\begin{example}[$L^p\subset (H_0^1)^*\subset (L^p)^*$ and $A=$ porous medium operator]
For $p\ge 2$, consider the following triple
\begin{equation*}
\begin{aligned}
V:=L^{p}(\oo)\subseteq (H_0^1(\oo))^*:=H\subseteq (L^p(\oo))^*=L^{\frac p{p-1}}(\oo):=V^*
\end{aligned}
\end{equation*}
and the stochastic porous medium equations
\begin{equation} \label{sys-e3}
\left\{
\begin{array}{l}\vspace{2mm}
du=\Delta \Psi(u)+F(u)+g(t, x))dt+\e G(u)dW,\quad x\in \oo, \ t>\t,\\
u(x, t)=0, \quad x\in \partial \oo,\ t>\t,\\
u(x, \t)=u_0\in L^2(\O, \F_\t; H^{-1}(\oo)).
\end{array}
\right.
\end{equation}
Here, the function $\Psi: \R\rightarrow \R$ possesses the following properties:
\begin{enumerate}[\bf($\Psi 1$)]
\item $\Psi$ is continuous.
\item For all $s, t\in \R$,
$$(t-s)(\Psi(t)-\Psi(s))\ge 0.$$
\item There exist $p\ge 2$, $\b_1>0$, $\b_2\ge 0$ such that for all $s\in \R$,
$$s\Psi(s)\ge \b_1|s|^p-\b_2.$$
\item There exist $\b_3,\b_4>0$ such that for all $s\in \R$,
$$|\Psi(s)|\le \b_3|s|^{p-1}+\b_4.$$
\end{enumerate}
Define the porous medium operator $A: L^{p}(\oo)\rightarrow (L^p(\oo))^*$ by
$$A(u):=\Delta \Psi(u), \quad u\in L^{p}(\oo).$$
Then the operator is well-defined, see e.g. \cite[Example 4.1.11]{PR07}.
Also, {\bf (H1)} holds. By {\bf ($\Psi 2$)}, $\g_3:=2\g_1$ in {\bf (H2)}.
{\bf (H3)} is satisfied with $\g_4=0$, $\g_5=\frac 23\b_1$, $\a=p$ and $h_1(t):=2\b_2|\oo|$.
Take $\a=p$, $\g_6=\b_3$ and $h_2(t):=\b_4|\oo|^{p/p-1}$, then {\bf (H4)} is satisfied.
Furthermore, $\g_2$ can be chosen such that
\begin{equation*}
\g_2<\frac 23\b_1\hat\l,
\end{equation*}
where $\hat\l$ defined in
\begin{equation*}
\|v\|_{L^p(\oo)}^2\ge \hat\l \|v\|_{H^{-1}(\oo)}^2\quad \text{for all}\quad v\in L^p(\oo).
\end{equation*}
Then we have the main result:
\begin{theorem}\label{main4}
Assume that $\g_1\ge 0$, $0<\g_2<\frac 23\b_1\hat\l$,
$\g_3=2\g_1$, $\g_4=0$, $\g_5=\frac 23\b_1$, $\g_6=\b_3$, $\a=p$, $h_1(t):=2\b_2|\oo|$,  $h_2(t):=\b_4|\oo|^{p/p-1}$ in
{\bf (H0)-(H5)} and \eqref{g2} hold. Then there exists $\tilde\e_0=\min\left\{1, \sqrt{\frac{\frac 23\b_1\hat\l-\g_2}{4\hat\l\b_3}}\right\}$ such that
for every $\e\in(0, \tilde\e_0]$, the random dynamical system $\Phi$  for problem \eqref{sys-e3}
possesses a unique weak $\D$-pullback mean random attractor $\A=\{\A(\t): \t\in \R\}\in \D$
in $L^2(\O, \F_\t; H^{-1}(\oo))$ over $(\O, \F, \{\F_\t\}_{t\in \R}, \P)$.
\end{theorem}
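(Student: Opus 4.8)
The plan is to verify that, for the stated choice of constants and auxiliary functions, the porous medium operator $A(u)=\Delta\Psi(u)$ on the Gelfand triple $V=L^{p}(\oo)\subset (H_0^1(\oo))^{*}=H\subset (L^{p}(\oo))^{*}=V^{*}$ satisfies {\bf (H1)--(H5)} and condition \eqref{g2}; here {\bf (H0)} (a hypothesis on $F$ alone) and the bound $0<\g_2<\tfrac23\b_1\l$ are assumed outright, so that the conclusion will follow at once from Theorem \ref{main1}. First I would record that the triple is admissible: $H=(H_0^1(\oo))^{*}$ is a separable Hilbert space and, on the bounded domain $\oo$, the embeddings $L^{p}(\oo)\hookrightarrow L^{2}(\oo)\hookrightarrow H^{-1}(\oo)$ are continuous and dense for $p\ge 2$, so $V\subset H$ continuously and densely. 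Then I would invoke \cite[Example 4.1.11]{PR07} for two structural facts: that $A$ takes values in $V^{*}$ for every $u\in L^{p}(\oo)$ (this uses {\bf ($\Psi 4$)}, which forces $\Psi(u)\in L^{p/(p-1)}(\oo)$), and that the $V^{*}$--$V$ dualization restricted to $H\times V$ agrees with $\langle\cdot,\cdot\rangle_{H}$, yielding the key identity ${}_{V^{*}}\langle\Delta\Psi(u),u\rangle_{V}=-\int_{\oo}u\,\Psi(u)\,dx$. Since $G\equiv 0$ throughout this section, the $G$-terms in {\bf (H2)} and {\bf (H3)} disappear and the auxiliary estimate \eqref{h5} is not needed.

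Next I would check the conditions one by one, essentially transcribing \cite[Example 4.1.11]{PR07}. Hemicontinuity {\bf (H1)} follows from continuity of $\Psi$ (and of $F$) via dominated convergence. For weak monotonicity {\bf (H2)}, the pairing ${}_{V^{*}}\langle A(v_1)-A(v_2),v_1-v_2\rangle_{V}=-\int_{\oo}(\Psi(v_1)-\Psi(v_2))(v_1-v_2)\,dx\le 0$ by {\bf ($\Psi 2$)}, so together with the Lipschitz bound on $F$ from {\bf (H0)} the inequality holds with $\g_3:=2\g_1$. For coercivity {\bf (H3)}, {\bf ($\Psi 3$)} gives $2\,{}_{V^{*}}\langle A(v),v\rangle_{V}=-2\int_{\oo}v\,\Psi(v)\,dx\le -2\b_1\|v\|_{L^{p}(\oo)}^{p}+2\b_2|\oo|$, which is of the required shape with $\g_4=0$, $\g_5=\tfrac23\b_1$ (so that $3\g_5=2\b_1$), $\a=p$ and $h_1(t)\equiv 2\b_2|\oo|\in L^{1}_{\rm loc}(\R)$. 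For the growth bound {\bf (H4)}, {\bf ($\Psi 4$)} gives $\|A(v)\|_{V^{*}}\le \b_3\|v\|_{L^{p}(\oo)}^{p-1}+h_2(t)$ with $h_2$ the stated constant, so $\g_6=\b_3$ and trivially $h_2\in L^{p/(p-1)}_{\rm loc}(\R)$.

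Finally I would verify {\bf (H5)} and \eqref{g2}. The constant $\l$ in \eqref{cont1} for this triple is the $\hat\l$ with $\|v\|_{L^{p}(\oo)}^{2}\ge\hat\l\|v\|_{H^{-1}(\oo)}^{2}$, whose existence again comes from the embeddings $L^{p}(\oo)\hookrightarrow L^{2}(\oo)\hookrightarrow H^{-1}(\oo)$ on the bounded domain. With $\g_4=0$ and $\g_5=\tfrac23\b_1$, condition {\bf (H5)}, i.e.\ $(\g_2+|\g_4|)/\g_5<\l$, reads exactly $\g_2<\tfrac23\b_1\l$, which is the standing hypothesis on $\g_2$. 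Since $h_1$ and $h_2$ are constants and $\int_{-\i}^{\t}e^{\l\g_5 s}\,ds<\i$, condition \eqref{g2} reduces to finiteness of $\int_{-\i}^{\t}e^{\l\g_5 s}\|g(s)\|_{H}^{2}\,ds$, which is assumed. With {\bf (H0)--(H5)} and \eqref{g2} confirmed, I would apply Theorem \ref{main1}: it produces the noise threshold, obtained by specializing \eqref{intensity} with $\g_4=0$ to $\tilde\e_0=\min\{1,\sqrt{(\tfrac23\b_1\l-\g_2)/(4\l)}\}$, and, for every $\e\in(0,\tilde\e_0]$, the unique weak $\D$-pullback mean random attractor $\A=\{\A(\t):\t\in\R\}\in\D$ in $L^{2}(\O,\F_\t;H^{-1}(\oo))$ over $(\O,\F,\{\F_t\}_{t\in\R},\P)$, as claimed. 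The only genuinely delicate point is the identification of the duality pairing on the nonstandard triple $L^{p}\subset(H_0^1)^{*}\subset(L^{p})^{*}$ and the attendant identity ${}_{V^{*}}\langle\Delta\Psi(u),u\rangle_{V}=-\int_{\oo}u\,\Psi(u)\,dx$; once this is granted from \cite[Example 4.1.11]{PR07}, the remainder is routine bookkeeping.
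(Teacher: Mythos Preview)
Your proposal is correct and follows essentially the same route as the paper: verify that the porous medium operator $A(u)=\Delta\Psi(u)$ on the triple $L^{p}(\oo)\subset (H_0^1(\oo))^{*}\subset L^{p/(p-1)}(\oo)$ satisfies {\bf (H1)--(H5)} with the indicated constants by invoking \cite[Example~4.1.11]{PR07} together with properties {\bf ($\Psi 1$)}--{\bf ($\Psi 4$)}, and then apply the abstract Theorem~\ref{main1}. Your write-up is in fact more explicit than the paper's, which records the same verification in the prose of the example and then states the theorem without a separate proof; your correct identification $\g_6=\b_3$ (rather than the $\g_6=\g_3$ appearing in the theorem statement, evidently a typo) and your explicit use of the pairing identity ${}_{V^{*}}\langle\Delta\Psi(u),u\rangle_{V}=-\int_{\oo}u\,\Psi(u)\,dx$ are exactly what the paper has in mind.
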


\end{example}

\bibliographystyle{amsplain}

\end{document}